\documentclass{article}
\usepackage{amssymb,amsmath,amsthm,graphicx,CJK}

\textheight 8.5in
\textwidth 6.5 in
\oddsidemargin 0in
\topmargin 0in

\def\qed{\hfill {\hbox{${\vcenter{\vbox{               %HOLLOW SQUARE
   \hrule height 0.4pt\hbox{\vrule width 0.4pt height 6pt
   \kern5pt\vrule width 0.4pt}\hrule height 0.4pt}}}$}}}

\def\utr{\, \underline{\ast}\, }
\def\otr{\, \overline{\ast}\, }

\newtheorem{theorem}{Theorem}

\newtheorem{proposition}[theorem]{Proposition}

\newtheorem{conjecture}{Conjecture}
\theoremstyle{definition}
\newtheorem{example}{Example}
\newtheorem{definition}{Definition}

\date{}

\title{\Large \textbf{Bikei Homology}}

\author{Sam Nelson\footnote{Email: Sam.Nelson@cmc.edu. Partially supported by Simons Foundation collaboration grant 316709}\and
Jake Rosenfield ‎\footnote{Email: jlrosenfield@gmail.com}} 

\begin{document}
\maketitle

\begin{abstract}
We introduce a modified homology and cohomology theory for involutory 
biquandles (also known as \textit{bikei}). We use bikei 2-cocycles to enhance
the bikei counting invariant for unoriented knots and links as well as
unoriented and non-orientable knotted surfaces in $\mathbb{R}^4$.
\end{abstract}

\medskip

\parbox{5.5in} {\textsc{Keywords:} bikei, involutory biquandles, 
cocycle invariants, bikei homology

\smallskip

\textsc{2010 MSC:} 57M27, 57M25}

\section{\large\textbf{Introduction}}\label{I}

In \cite{J}, Joyce introduced an algebraic structure known as \textit{quandles}
which can be used to define computable invariants of oriented knots and links
(see also \cite{M}).
For unoriented knots and links, a special case known as \textit{involutory 
quandles} or \textit{kei} (\begin{CJK*}{UTF8}{min}圭\end{CJK*}) has been studied
going back to Takasaki \cite{T}. In \cite{FR} quandles were generalized to 
\textit{racks} and in \cite{FRS} racks were generalized to \textit{biracks}.
In \cite{AN}, the involutory case of biquandles was considered, now known as
\textit{bikei} (\begin{CJK*}{UTF8}{min}双圭\end{CJK*}).

In \cite{FRS} a homology theory for racks and biracks was introduced in which
the 2-cocycle condition corresponds to the Reidemeister III move for a certain
way of associating 2-chains to crossings in an oriented rack-colored knot or 
link diagram. In \cite{CJKLS} a subcomplex was defined corresponding to 
Reidemeister I moves in the quandle case, leading to the theory of quandle
2-cocycle invariants of knots and links. In \cite{CES} this construction was
generalized to the biquandle case. In \cite{EN1} the degenerate subcomplex was 
generalized for the case of non-quandle racks, in each case defining a new
family of cocycle enhancements of counting invariants.

In this paper we introduce a generalization of biquandle homology to the
case of bikei which we call \textit{bikei homology}. The paper is organized 
as follows. In Section \ref{B} we review the basics of bikei and the bikei 
counting invariant. In Section \ref{BH} we introduce bikei homology and 
cohomology. In Section \ref{CE} we define the bikei cocycle enhancements of 
the bikei counting invariant for unoriented knots and provide some
examples. In Section \ref{KS} we extend the bikei cocycle invariant to 
unoriented (including non-orientable) knotted surfaces in $\mathbb{R}^4$. 
In Section \ref{Q} we finish with some questions for future work.

\section{\large\textbf{Bikei}}\label{B}

We begin with a review of bikei (see \cite{AN,EN} for more).

\begin{definition}
A \textit{bikei} is a set $X$ with two binary operations 
$\utr,\otr:X\times X\to X$ satisfying for all $x,y,z\in X$
\begin{itemize}
\item[(i)] $x\utr x=x\otr x$,
\item[(ii)] 
\[\begin{array}{rclc}
(x\otr y)\otr y & = & x & (ii.i) \\
(x\utr y)\utr y & = & x & (ii.ii)\\
x\utr(y\otr x)  & = & x\utr y & (ii.iii)\\
x\otr(y\utr x)  & = & x\otr y & (ii.iv),
\end{array}\] and
\item[(iii)] (Exchange Laws) 
\[\begin{array}{rclc}
(x\otr y)\otr (x\otr y) & = & (x\otr z)\otr (y\utr z) & (iii.i) \\
(x\utr y)\otr (x\utr y) & = & (x\otr z)\utr (y\otr z)  & (iii.ii)\\
(x\utr y)\utr (z\utr y) & = & (x\utr z)\utr (y\otr z) &  (iii.iii).
\end{array}\]
\end{itemize}
Note that $x\utr y$ and $x\otr y$ are also denoted  in the literature 
by $x^y=B_2(x,y)$ and $x_y=B_1(x,y)$ respectively; we are following the 
notation used in \cite{EN}.
\end{definition} 

Some standard examples (see \cite{AN, EN}) of bikei structures include: 

\begin{example} 
Let $X$ be a set and $\sigma:X\to X$ any involution, i.e.,
any map such that $\sigma^2=\mathrm{Id}_X$. Then $X$ is a bikei with operations
\[x\utr y=x\otr y=\sigma(x)\]
known as a \textit{constant action bikei}.
\end{example}

\begin{example}
Let $\Lambda=\mathbb{Z}[t,s]/(t^2-1,s^2-1,(t-1)(s-1))$ be the quotient of the
ring of two-variable polynomials with integer coefficients such that 
$s^2=t^2=1$ by the ideal generated by $(1-t)(1-s)$. Then any $\Lambda$-module
$X$ is a bikei with operations
\[x\utr y=tx+(s-t) y,\quad x\otr y=sx\]
known as an \textit{Alexander bikei}. To see this, we can verify the axioms; we
will show (iii) and leave the verification of (i) and (ii) to the reader. 
%\begin{itemize}
%\item[(i)] $x\utr x=tx+(s-t)x=sx=x\otr x$,
%\item[(ii)] 
%\[\begin{array}{rcl}
%(x\otr y)\otr y & = & s^2x\\
%& = & x, \\
%(x\utr y)\utr y & = & t(tx+(s-t)y)+(s-t)y \\
%& = & t^2x+(ts-t^2+s-t)y \\
%& = & x+(1-t)(1-s) y \\
%& = & x, \\
%x\utr(y\otr x)  & = & tx+(s-t)(sy)\\
%& = & tx+(s^2-st)y\\
%& = & tx+(1-st)y \\
%& = & tx+(1-t^2+s-t)y \\
%& = & tx+(s-t)y \\
%& = & x\utr y\quad \mathrm{and} \\
%x\otr(y\utr x)  & = & sx \\
%& = & x\otr y,\\
%\end{array}\] and
%\item[(iii)] 
\[\begin{array}{rcl}
(x\otr y)\otr (x\otr y) & = & 
s(sx) \\
& = & (x\otr z)\otr (y\utr z), \\
(x\utr y)\otr (x\utr y) & = & 
s(tx+(s-t)y) \\
& = & t(sx)+(s-t)(sy)\\
& = & (x\otr z)\utr (y\otr z) \\
(x\utr y)\utr (z\utr y) & = & 
t(tx+(s-t)y)+(s-t)(tz+(s-t)y) \\
& = & t^2x+t(s-t)y+t(s-t)z+(s-t)^2y \\
& = & t^2x+t(s-t)z+ (s-t)(t+s-t)y\\
& = & t(tx+(s-t)z)+(s-t)(sy)\\
& = & (x\utr z)\utr (y\otr z)
\end{array}\]
%\end{itemize}
\end{example}

\begin{definition}
A map $f:X\to Y$ between bikei is a \textit{bikei homomorphism} if we have
\[f(x\utr y)=f(x)\utr f(y) 
\quad \mathrm{and}\quad
f(x\otr y)=f(x)\otr f(y) 
\]
for all $x,y\in X$. A bijective bikei homomorphism is a \textit{bikei 
isomorphism}.
\end{definition}

\begin{example}
Let $X=\{x_1,\dots, x_n\}$ be a finite set. We can represent any bikei 
structure on $X$ with an $n\times 2n$ block matrix $M$encoding the operation
tables of $\utr$ and $\otr$ by setting $M_{j,k}=l$ and $M_{j,k+n}=m$ where
$x_j\utr x_k=x_l$ and $x_j\otr x_k=x_m$ for $j,k\in\{1,\dots,n\}$. For example, 
there are two nonisomorphic bikei on the set $X=\{x_1,x_2\}$, given by
the matrices
\[
\left[\begin{array}{rr|rr}
1 & 1 & 1 & 1 \\
2 & 2 & 2 & 2 
\end{array}\right]
\quad\mathrm{and}\quad
\left[\begin{array}{rr|rr}
2 & 2 & 2 & 2 \\
1 & 1 & 1 & 1 
\end{array}\right].
\]
See \cite{AN, EN, NV} for more.
\end{example}

\begin{example} (See \cite{AN} for more)
Let $D$ be an unoriented knot or link diagram representing an unoriented knot
or link $K$ and let $G$ be a set of 
generators corresponding to semiarcs in $D$. The set $W$ of \textit{bikei 
words} in $G$ is defined recursively by the rules
\begin{itemize}
\item[(i)] $x\in G\Rightarrow x\in W$ and
\item[(ii)] $x,y\in W\Rightarrow x\utr y,x\otr y\in W$.
\end{itemize}
Then the \textit{fundamental bikei} of $D$, denoted $\mathcal{BK}(D)$, is 
the set of equivalence classes 
of $W$ under the equivalence relation generated by the bikei axioms and the
\textit{crossing relations} in $D$, i.e.
\[\includegraphics{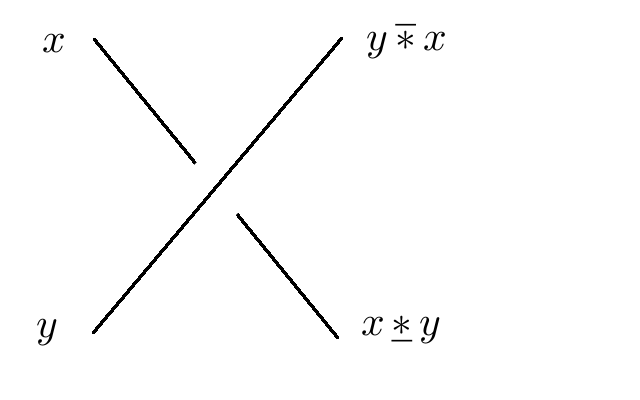}\]
We express such a bikei with a \textit{bikei presentation}, i.e. an expression
of the form
\[\mathcal{BK}(D)=\langle g_1,\dots, g_n\ | \ r_1,\dots, r_n\rangle\]
where $\{g_1,\dots,g_n\}$ are generators and $\{r_1,\dots, r_n\}$ are crossing
relations, with the bikei axiom relations understood. 
It is easy to check that Reidemeister moves on $D$ induce Tietze moves on
presentations, and hence the isomorphism type of the fundamental bikei is an 
invariant of unoriented knots and links; hence, we will generally write
$\mathcal{BK}(K)$ instead of $\mathcal{BK}(D)$.
\end{example}

Given an unoriented knot or link $K$ represented by a diagram $D$ and a finite
bikei $X$, the \textit{bikei counting invariant} $\Phi^{\mathbb{Z}}_X(K)$  is the 
cardinality of the set of bikei homomorphisms $f:\mathcal{BK}(K)\to X$, i.e.
\[\Phi^{\mathbb{Z}}_X(K)=|\mathrm{Hom}(\mathcal{BK}(K),X)|.\]
The superscript $\mathbb{Z}$ indicates this is the integer-valued 
unenhanced version of the invariant; we will soon enhance this invariant
with bikei cocycles.

Every such homomorphism assigns an element of $X$ to each generator of
$\mathcal{BK}(K)$, which we can think of as coloring the corresponding
semiarc in $D$. Conversely, an assignment of elements of $X$ to the semiarcs
in $D$ determines a bikei homomorphism $f:\mathcal{BK}(K)\to X$ only if
it satisfies the crossing relations at every crossing. Hence, we can compute
the bikei counting invariant of an unoriented knot or link by counting
bikei colorings of any diagram of $D$ which satisfy the crossing relations.

\begin{example}
Consider the bikei $X=\mathbb{Z}_2=\{0,1\}$ with $x\utr y=x\otr y=x+1$. As a 
coloring rule, this says that each time we go through a crossing either 
over or under, we switch from 0 to 1 or 1 to 0. Then for any classical
knot, there are exactly two $X$-colorings, determined by our choice of 
starting color on a choice of semiarc.
\[\includegraphics{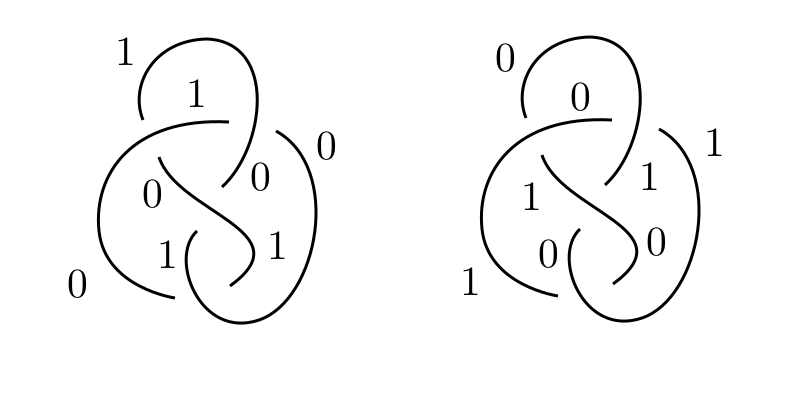}\]
\end{example}

In the next section, we will enhance the bikei counting invariant with cocycles
in a bikei homology theory to get a stronger invariant following \cite{CJKLS,CES} etc, but using bikei and unoriented diagrams.

\section{\large\textbf{Bikei Homology}}\label{BH}

\begin{definition}
Let $X$ be a bikei and $A$ and ableian group. Set $C_n(X;A)=A[X^n]$ for 
$n\ge 1$ and $\{0\}$ otherwise. The
\textit{birack boundary map} $\partial_n:C_n(X;A)\to C_{n-1}(X;A)$ 
is defined on generators $\vec{x}=(x_1,\dots,x_n)$ by
\[\partial(\vec{x})=\sum_{k=1}^n(-1)^{k-1} 
\left(\partial^1_k(\vec{x})-\partial^2_k(\vec{x})\right)\]
where
\begin{eqnarray*}
\partial^1_k(x_1,\dots,x_n) & = & (x_1,\dots,x_{k-1},x_{k+1},\dots,x_n) \ 
\mathrm{and}\\
\partial^2_k(x_1,\dots,x_n) & = & (x_1\utr x_k,\dots,x_{k-1}\utr x_k,x_{k+1}\otr x_k,\dots,x_n\otr x_k)
\end{eqnarray*}
and extended to $C_n(X;A)$ by linearity. The resulting homology and cohomolgy
groups $H_n(X;A)=\mathrm{Ker}\ \partial_n/\mathrm{Im}\ \partial_{n-1}$ and
$H^n=\mathrm{Ker}\ \delta^{n+1}/\mathrm{Im}\ \delta^n$ where 
$\delta^n \phi =\phi \partial_n$ are the \textit{birack homology and cohomology}
groups of $X$ with coefficients in $A$.
\end{definition}

In previous work \cite{CJKLS, CES, CEGN}, the subset $C^D_n(X;A)$
of $C_n(X;A)$ generated by elements $(x_1,\dots,x_n)$ with $x_j=x_{j+1}$ 
for some $j=1,\dots,n-1$ was identified as the \textit{degenerate subcomplex}.
Then the \textit{biquandle homology and cohomology} groups, also known
as the \textit{Yang-Baxter homology and cohomology} groups, are the homology 
and cohomology groups of the quotient complex $C^B_n(X;A)=C_n(X;A)/C^D_n(X;A)$.

%We will find the follwing observation useful:
%\begin{lemma} Let $X$ be a bikei.
%Elements of $C_n(X)$ generated by chains of the form
%\[\vec{u}-\vec{v}
%=(x,\dots,x,y\dots, y)-(x\utr y,\dots, x \utr y,y\otr x,\dots, y\otr x)\]
%where 
%\[u_j=\left\{\begin{array}{ll}
%x & j<k \\
%y & j>k
%\end{array}\right
%\quad\mathrm{and}\quad
%v_j=\left\{\begin{array}{ll}
%x\utr y & j<k \\
%y\otr x & j>k
%\end{array}\right.\]
%for some fixed $k=1,2,\dots, n$ are elements of $C_n^D(X)$.
%\end{lemma}
%
%\begin{proof}
%The statement is clear for $n\ge 3.$
%\end{proof}

We now introduce a slight generalization for bikei.
%it forms a subcomplex since for any $k\not\int \{j,j+1\}$ the adjacent
%entries are affected the same way by $\partial^0_n$ and $\partial^1_n$,
%and for $k\in\{j,j+1\}$
\begin{definition}
Let $X$ be a bikei. The \textit{bikei degenerate} subgroups of $C_n(X;A)$,
denoted $C^{BD}_n(X;A)$, are generated by chains of the form
$(x)-(x\utr y)$ and $(x)-(x\otr y)$ when $n=1$, chains of the forms
\[(x,x),\ (x,y)-(x\utr y,y\otr x),\ (x,y)+(x,y\otr x)\ \mathrm{and}\ 
(x,y)+(x\utr y,y)\]
%\begin{itemize}
%\item[(i)] $(x,x)$,
%\item[(ii)] $(x,y)-(x\utr y,y\otr x)$,
%\item[(iii)] $(x,y)+(x,y\otr x)$, and
%\item[(iv)] $(x,y)+(x\utr y,y)$.
%\item[(iv)] 
%$(x,\dots, x,y\otr x,\dots, y\otr x)-(x\utr y,\dots, x\utr y,y,\dots, y)$  and
%\item[(v)] 
%\end{itemize}
for $n=2$ and by chains of the form $(\dots, x,x,\dots)$ for $n\ge 2$.
\end{definition}

\begin{proposition}
For a bikei $X$, $(C^{BD}_n,\partial_n)$ forms a subcomplex.
\end{proposition}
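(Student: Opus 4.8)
The plan is to verify directly that the boundary map carries $C^{BD}_n(X;A)$ into $C^{BD}_{n-1}(X;A)$ for every $n$. Since $C_0(X;A)=\{0\}$ the case $n=1$ is vacuous, so there are two cases to handle, $n=2$ and $n\ge 3$, matching the two kinds of generators of the bikei degenerate subgroups.

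For $n=2$ I would compute the boundary of an arbitrary generator of $C_2(X;A)$ directly: from the definition,
\[
\partial_2(x_1,x_2)=(x_2)-(x_2\otr x_1)-(x_1)+(x_1\utr x_2),
\]
which regroups as $[(x_2)-(x_2\otr x_1)]-[(x_1)-(x_1\utr x_2)]$, a difference of two of the listed generators of $C^{BD}_1(X;A)$. Hence $\partial_2$ maps all of $C_2(X;A)$ into $C^{BD}_1(X;A)$, and a fortiori the subgroup $C^{BD}_2(X;A)$; in particular the four special $n=2$ generators $(x,x)$, $(x,y)-(x\utr y,y\otr x)$, $(x,y)+(x,y\otr x)$, $(x,y)+(x\utr y,y)$ need no separate treatment, since only the inclusion $C^{BD}_2\subseteq C_2$ is used.

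For $n\ge 3$ the bikei degenerate subgroup is generated by tuples $\vec{x}=(x_1,\dots,x_n)$ with $x_j=x_{j+1}$ for some $j$, so it suffices to show $\partial_n(\vec{x})\in C^{BD}_{n-1}(X;A)$ for such $\vec{x}$. I would split the defining sum according to whether $k\in\{j,j+1\}$ or not. For $k\notin\{j,j+1\}$, deleting the $k$-th entry keeps the two equal entries adjacent, and in $\partial^2_k$ they get acted on by the \emph{same} operation with the \emph{same} argument $x_k$; hence both $\partial^1_k(\vec{x})$ and $\partial^2_k(\vec{x})$ still have a pair of equal adjacent entries and lie in $C^{BD}_{n-1}(X;A)$ (when $n-1=2$, these are tuples $(z,z)$, one of the listed $n=2$ generators). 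The two remaining summands, $k=j$ and $k=j+1$, carry opposite signs $(-1)^{j-1}$ and $(-1)^{j}$ and cancel: deleting either of two equal adjacent entries gives the same tuple, so $\partial^1_j(\vec{x})=\partial^1_{j+1}(\vec{x})$, while $\partial^2_j(\vec{x})$ and $\partial^2_{j+1}(\vec{x})$ differ only in the single slot equal to $x_j\otr x_j$ in one and to $x_{j+1}\utr x_{j+1}=x_j\utr x_j$ in the other, and these agree by axiom (i). Therefore $\partial_n(\vec{x})\in C^{BD}_{n-1}(X;A)$.

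The argument is essentially bookkeeping and I do not foresee a serious obstacle; the step needing the most care is the $n\ge 3$ cancellation, specifically confirming that the vanishing of the $k=j$ versus $k=j+1$ contribution really uses the involutory axiom $x\utr x=x\otr x$ (the plain birack/biquandle axioms would not close the degenerate subcomplex under these enlarged generators), together with the edge cases $j=1$, $j=n-1$, and $n=3$, where the relevant indices or the target generating set sit slightly differently but the conclusion is unchanged.
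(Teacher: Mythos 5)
Your argument is correct, and for the $n=2$ case it takes a genuinely more economical route than the paper's. The paper proves the $n=2$ case by explicitly computing the boundary of each of the three nontrivial generators $(x,y)+(x\utr y,y)$, $(x,y)+(x,y\otr x)$ and $(x,y)-(x\utr y,y\otr x)$, using the bikei axioms to simplify terms such as $(x\utr y)\utr y=x$ and $y\otr(x\utr y)=y\otr x$, and obtaining $2[(y)-(y\otr x)]$, $2[(x\utr y)-(x)]$ and $2[(y)-(y\otr x)]-2[(x)-(x\utr y)]$ respectively, each visibly in $C^{BD}_1(X;A)$. Your observation that
\[
\partial_2(x_1,x_2)=\bigl[(x_2)-(x_2\otr x_1)\bigr]-\bigl[(x_1)-(x_1\utr x_2)\bigr]\in C^{BD}_1(X;A)
\]
for \emph{every} generator of $C_2(X;A)$ renders that case analysis unnecessary: the enlarged $n=1$ generating set is exactly large enough to absorb the image of all of $\partial_2$, so any subgroup of $C_2$ whatsoever is carried into $C^{BD}_1$. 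What the paper's longer computation buys is the explicit form of these boundaries, which is mildly informative (e.g.\ they are all $2$-torsion-flavored, consistent with the later proposition that $H^2_{BK}(X;\mathbb{F})$ vanishes for kei over a field), but it is not needed for the subcomplex claim. Your $n\ge 3$ argument --- cancellation of the $k=j$ and $k=j+1$ terms via $\partial^1_j(\vec{x})=\partial^1_{j+1}(\vec{x})$ and $\partial^2_j(\vec{x})=\partial^2_{j+1}(\vec{x})$, the latter using axiom (i), together with preservation of the adjacent equal pair when $k\notin\{j,j+1\}$ --- is the standard one and is carried out correctly; the paper's printed proof in fact records only the $n=2$ computations, so your treatment is, if anything, more complete. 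One small terminological point: the identity $x\utr x=x\otr x$ you invoke is axiom (i), the biquandle-type axiom rather than the involutory axioms (ii.i)--(ii.ii); it already makes the ordinary degenerate complex $C^D_n$ a subcomplex in the biquandle setting, and the genuinely new bikei content lies entirely in the enlarged $n=1$ and $n=2$ generating sets, which your uniform computation of $\partial_2$ disposes of at once.
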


\begin{proof}

When $n=2$ the degenerate chains %$(x,x)$, 
$(x,y)+(x\utr y,y)$ 
and $(x,y)+(x,y\otr x)$ have boundary
%\[\partial((x,x))=(x)-(x\otr x)-(x)+(x\utr x)=0,\]
\begin{eqnarray*}
\partial((x,y)+(x\utr y,y)) & = &
[(y)-(y\otr x)-(x)+(x\utr y)] \\ & & \quad
+[(y)-(y\otr(x\utr y))-(x\utr y)+((x\utr y)\utr y)]  \\
& = & [(y)-(y\otr x)-(x)+(x\utr y)] 
+[(y)-(y\otr x)-(x\utr y)+(x)] \\
& = & 2[(y)-(y\otr x)],
\end{eqnarray*}
and
\begin{eqnarray*}
\partial((x,y)+(x,y\otr x)) & = &
[(y)-(y\otr x)-(x)+(x\utr y)] \\ & & \quad
+[(y\otr x)-((y\otr x)\otr x)-(x)+(x\utr(y\otr x))]  \\
& = & [(y)-(y\otr x)-(x)+(x\utr y)] 
+[(y\otr x)-(y)-(x)+(x\utr y)] \\
& = & 2[(x\utr y)-(x)]
\end{eqnarray*} while the chains  $(x,y)-(x\utr y,y\otr x)$
have boundary
\begin{eqnarray*}
\partial((x,y)-(x\utr y,y\otr x)) & = &
[(y)-(y\otr x)-(x)+(x\utr y)] \\ & & \quad
-[(y\otr x)-((y\otr x)\otr(x\utr y))-(x\utr y)+((x\utr y)\utr(y\otr x)]  \\
& = & [(y)-(y\otr x)-(x)+(x\utr y)] 
-[(y\otr x)-(y)-(x\utr y)+(x)] \\
& = & 2[(y)-(y\otr x)]-2[(x)-(x\utr y)]\in C_1^{BD}(X;A).
\end{eqnarray*}

Hence, $\partial_n(C^{BD}_n(X;A))\subset C^{BD}_{n-1}(X;A)$ and 
$(\partial,C^{BD}_n(X;A))$ is a subcomplex.
\end{proof}

\begin{definition}
Let $X$ be a bikei. Then for each $n\ge 1$, 
set $C_n^{BK}(X;A)=C_n(X;A)/C^{BD}_n(X;A)$.
The resulting homology and cohomology groups $H_n^{BK}(X;A)$ and 
$H^n_{BK}(X;A)$ are
the \textit{bikei homology and cohomology} groups of $X$.
We will generally denote elements of $C^n(X;A)$ and their
equivalence classes in $H^n_{BK}(X;A)$ by $\phi.$
\end{definition}

Since $C_n^{BD}(X;A)=C^D_n(X;A)$ for $n\ge 3$, we have:

\begin{theorem}
For $n>3$, the biquandle and bikei homology and cohomology groups for a bikei
$X$ coincide, i.e. $H_n^{B}(X;A)=H_n^{BK}(X;A)$  and $H^n_{B}(X;A)=H^n_{BK}(X;A)$.
\end{theorem}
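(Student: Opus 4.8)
The plan is to compare the two degenerate subcomplexes degree by degree. The biquandle (Yang--Baxter) homology of $X$ comes from the quotient $C^B_n=C_n/C^D_n$, where $C^D_n$ is generated by tuples $(x_1,\dots,x_n)$ with $x_j=x_{j+1}$ for some $j$; the bikei homology comes from $C^{BK}_n=C_n/C^{BD}_n$. So it suffices to show $C^{BD}_n(X;A)=C^D_n(X;A)$ for every $n>3$, since then the quotient complexes are literally identical and hence so are all their homology and cohomology groups. In fact the remark immediately preceding the statement already asserts $C_n^{BD}(X;A)=C^D_n(X;A)$ for $n\ge 3$, so the content is just to unwind the definition and confirm this.

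First I would look at the definition of $C^{BD}_n$. For $n=1$ and $n=2$ it lists extra generators beyond the ``repeated adjacent entry'' type — namely $(x)-(x\utr y)$, $(x)-(x\otr y)$ in degree $1$, and $(x,y)-(x\utr y,y\otr x)$, $(x,y)+(x,y\otr x)$, $(x,y)+(x\utr y,y)$ in degree $2$ — but for $n\ge 2$ the only generators listed are the chains $(\dots,x,x,\dots)$ with two equal adjacent entries. When we restrict to $n\ge 3$, there are no longer any ``extra'' low-degree generators contributing (the degree-$1$ and degree-$2$ special generators live only in $C_1$ and $C_2$), so $C^{BD}_n$ is generated precisely by the tuples with some $x_j=x_{j+1}$, which is exactly the generating set of $C^D_n$. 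Hence $C^{BD}_n=C^D_n$ as subgroups of $C_n$ for all $n\ge 3$, and in particular for $n>3$.

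Next I would record that both are subcomplexes — $C^D_n$ by the standard computation in \cite{CJKLS,CES,CEGN}, and $C^{BD}_n$ by the Proposition just proved — so the quotient complexes $C^B_\bullet$ and $C^{BK}_\bullet$ are well defined, and by the previous paragraph they agree in every degree $n>3$. For the finitely many remaining degrees ($n\le 3$) one must be slightly careful: a priori the homology in degree $n$ depends on the chain groups in degrees $n-1,n,n+1$, so equality of $H_n$ for $n>3$ is immediate from degreewise equality of the complexes in all degrees $\ge 3$ (which includes degree $3$, the only ``boundary'' degree that a degree-$4$ group could see), and $H^n$ likewise by dualizing: $H^n_B=H^n_{BK}$ for $n>3$ follows from $C^B_n=C^{BK}_n$ and $C^B_{n+1}=C^{BK}_{n+1}$ together with the matching coboundary maps. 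So for $n>3$ everything in sight — chain groups, boundary maps, cycles, boundaries — coincides, giving $H_n^B(X;A)=H_n^{BK}(X;A)$ and $H^n_B(X;A)=H^n_{BK}(X;A)$.

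There is essentially no obstacle here: the statement is a bookkeeping consequence of the definitions, and the only thing to watch is the off-by-one issue at the bottom of the range — making sure that $C^{BD}_3=C^D_3$ (not merely $C^{BD}_n=C^D_n$ for $n\ge 4$) so that the degree-$4$ homology, whose boundary maps land in degree $3$, really does agree on the nose. Since the degenerate generators for $n=3$ are already of the pure ``$x_j=x_{j+1}$'' form in both theories, this holds, and the proof is complete.
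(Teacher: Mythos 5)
Your argument is correct and is exactly the paper's reasoning: the paper proves the theorem with the single observation that $C_n^{BD}(X;A)=C_n^D(X;A)$ for $n\ge 3$, so the quotient complexes agree in all degrees $\ge 3$ and hence the (co)homology groups agree for $n>3$. Your additional care about the off-by-one issue at degree $3$ is a worthwhile explicit check that the paper leaves implicit, but the approach is the same.
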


\begin{proposition}
If $X$ is a bikei in which $x\otr y=x$ for all $x,y$ (that is, a kei) or a bikei
in which $x\utr y=x$ for all $x,y$, then $H^2_{BK}(X;\mathbb{F})=\{0\}$ for any
field  $\mathbb{F}$. 
\end{proposition}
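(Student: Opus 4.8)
\medskip
\noindent\textbf{Proof idea.} I would split on the characteristic of $\mathbb{F}$, noting first that the two hypotheses are parallel under interchanging $\utr$ and $\otr$, so it suffices to treat the kei case $x\otr y=x$ in detail. If $\mathrm{char}\,\mathbb{F}\neq 2$ the argument is immediate: in the kei case the degenerate generator $(x,y)+(x,y\otr x)$ is just $2(x,y)$, and since $2$ is a unit of $\mathbb{F}$ this forces $(x,y)\in C^{BD}_2(X;\mathbb{F})$ for every $x,y$, so $C^{BK}_2(X;\mathbb{F})=0$; hence $C^2_{BK}(X;\mathbb{F})=0$ and a fortiori $H^2_{BK}(X;\mathbb{F})=0$. (Under the other hypothesis one uses $(x,y)+(x\utr y,y)=2(x,y)$ in the same way.) So the real work is in characteristic $2$.

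Assume now $\mathrm{char}\,\mathbb{F}=2$. First I would make the relevant conditions explicit. In the kei case the four families of degenerate $2$-chains collapse, in characteristic $2$, to the two identities $\phi(x,x)=0$ and $\phi(x\utr y,y)=\phi(x,y)$, so a bikei $2$-cochain is exactly a map $\phi\colon X^2\to\mathbb{F}$ satisfying these. Specializing the birack boundary to a kei gives $\partial_3(x,y,z)=-(x,z)+(x\utr y,z)+(x,y)-(x\utr z,\,y\utr z)$ and $\partial_2(x,y)=(x\utr y)-(x)$, so $\phi$ is a cocycle precisely when
\[ \phi(x\utr y,z)+\phi(x,y)=\phi(x,z)+\phi(x\utr z,\,y\utr z)\qquad(x,y,z\in X), \]
and the coboundary of a $1$-cochain $\psi$ is $(\delta\psi)(x,y)=\psi(x\utr y)+\psi(x)$. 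The goal is then: given a cocycle $\phi$, construct $\psi$ with $\delta\psi=\phi$.

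The construction should proceed by ``integrating'' $\phi$ along colorings. Each right translation $\beta_y\colon x\mapsto x\utr y$ is an involution of $X$ by axiom (ii.ii); let $G$ be the group they generate and decompose $X$ into $G$-orbits. In each orbit fix a base point $p$ with $\psi(p)=0$, and for $q$ in that orbit, choosing a factorization $q=\beta_{y_k}\cdots\beta_{y_1}(p)$, set $\psi(q)=\sum_{i=1}^{k}\phi\bigl(\beta_{y_{i-1}}\cdots\beta_{y_1}(p),\,y_i\bigr)$. Once this is known to be well defined, the identity $\delta\psi=\phi$ drops out of the additivity of the word-sum (using $\mathrm{char}\,\mathbb{F}=2$), and we are done.

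The crux — the step I expect to absorb essentially all of the effort — is the well-definedness of $\psi$: one must show that the $\phi$-sum around any loop of translations fixing the base point vanishes. I would reduce this to a generating family of such loops coming from relations among the $\beta_y$. For a relation $\beta_y^2=\mathrm{id}$ the loop contributes $\phi(x,y)+\phi(x\utr y,y)=2\phi(x,y)=0$ by the degeneracy identity. For the conjugation relation $\beta_z\beta_y\beta_z=\beta_{y\utr z}$ — valid because axiom (iii.iii) yields $((x\utr z)\utr y)\utr z=x\utr(y\utr z)$ — the corresponding loop based at $x$ contributes $\phi(x,z)+\phi(x\utr z,y)+\phi((x\utr z)\utr y,z)+\phi(x,\,y\utr z)$; applying the cocycle identity to the triple $(x\utr z,\,y,\,z)$ together with $\phi(x\utr z,z)=\phi(x,z)$ shows this sum equals $2\bigl(\phi(x,z)+\phi(x,y\utr z)\bigr)=0$. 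Thus the cocycle condition is precisely what is needed to annihilate the loops carried by the cubical bikei axiom, while the involution relation handles the rest; the delicate point is verifying that these loops really do generate everything relevant, which one extracts from the way the bikei axioms constrain the action of $G$ on $X$. Finally, the hypothesis $x\utr y=x$ is handled by the same scheme with the two operations (and $\partial_3$, the degeneracy relations and $\beta_y$) exchanged throughout.
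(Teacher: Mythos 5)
Your characteristic~$\neq 2$ argument is exactly the paper's proof: in the kei case the degenerate chain $(x,y)+(x,y\otr x)$ (resp.\ $(x,y)+(x\utr y,y)$ under the other hypothesis) becomes $2(x,y)$, so every cochain on the quotient complex satisfies $2\phi(x,y)=0$ and hence vanishes once $2$ is invertible. Where you part ways with the paper is in noticing that this says nothing in characteristic $2$; the paper's proof stops at ``$2\phi(x,y)=0$, so over a field $\phi(x,y)=0$,'' which silently assumes $\mathrm{char}\,\mathbb{F}\neq 2$.

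Your characteristic-$2$ argument, however, breaks down at exactly the point you flag. Well-definedness of the integrated $1$-cochain $\psi$ requires the $\phi$-sum around \emph{every} loop of translations to vanish, and the two families of relations you verify ($\beta_y^2=\mathrm{id}$ and the conjugation relation) do not generate all such loops: point stabilizers are missing. If $x\utr y=x$, then $\beta_y$ itself is a loop at $x$ contributing $\phi(x,y)$, and neither the degeneracy conditions nor the cocycle condition forces this to vanish in characteristic $2$. Concretely, take the trivial bikei $x\utr y=x\otr y=x$ on a two-element set (which satisfies the hypothesis $x\otr y=x$) with $\mathbb{F}=\mathbb{F}_2$: every $\beta_y$ is the identity, so your construction returns $\psi\equiv 0$; meanwhile $\partial_2=\partial_3=0$, the chains $(x,y)-(x\utr y,y\otr x)$ vanish identically and the remaining non-diagonal degeneracies are $2(x,y)=0$, so the only surviving constraint is $\phi(x,x)=0$ and $H^2_{BK}(X;\mathbb{F}_2)\cong\mathbb{F}_2^2\neq\{0\}$. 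Thus the gap is not repairable: the proposition itself fails in characteristic $2$, and both your sketch and the paper's proof are valid only under the additional hypothesis $\mathrm{char}\,\mathbb{F}\neq 2$.
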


\begin{proof}
In either of the listed cases, the degenerate group includes all cochains 
$\phi\in C^2(X;A)$ of 
the form $\phi(x,y)+\phi(x,y)=2\phi(x,y)$, so we must have $2\phi(x,y)=0$. 
Then if our coefficients belong to a field, we must have $\phi(x,y)=0$ for 
all $x,y\in X$.
\end{proof}

At first, it may seem like bikei homology is completely trivial, but it turns 
to have nontrivial torsion part in at least some cases. 

\begin{proposition}
Let $A$ be a commutative ring with identity and let $X$ be an Alexander bikei 
structure on $A$, i.e., a choice of units $t,s\in A^{\times}$ such that 
$(1-s)(1-t)=0$ defining operations
\[x\utr y=tx+(s-t)y\quad\mathrm{and}\quad x\otr y=sx.\]
Then for elements $a,b\in A$ satisfying
\[b=-a \quad \mathrm{and}\quad 2a=a(1+s)=a(1+t)=a(1-t)=a(s-t-2)=0,\] 
the linear map $\phi(x,y)=ax+by$ 
defines a bikei cocycle in $H^2_{BK}(X;A)$.
\end{proposition}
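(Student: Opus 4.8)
The plan is to verify directly that $\phi$ satisfies the two conditions needed to represent a class in $H^2_{BK}(X;A)$: first, that $\phi$ descends to the quotient complex $C^{BK}_2(X;A)=C_2(X;A)/C^{BD}_2(X;A)$, i.e. that $\phi$ annihilates the bikei degenerate subgroup; and second, that $\phi$ is a cocycle, i.e. $\delta^3\phi=\phi\circ\partial_3=0$. After substituting the Alexander operations $x\utr y=tx+(s-t)y$ and $x\otr y=sx$, both conditions reduce to a handful of linear identities in $a$ and $b$, each of which is one of the listed hypotheses or an immediate consequence thereof. For bookkeeping it is convenient to note that $a(1-t)=0$ gives $at=a$, and then $a(s-t-2)=0$ together with $2a=0$ gives $as=a$.

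For the first condition I would run through the generators of $C^{BD}_2(X;A)$. On $(x,x)$ we get $\phi(x,x)=(a+b)x=0$ since $b=-a$. On $(x,y)+(x,y\otr x)=(x,y)+(x,sy)$ we get $2ax+b(1+s)y$, which vanishes by $2a=0$ and $b(1+s)=-a(1+s)=0$. On $(x,y)+(x\utr y,y)$ we get $a(1+t)x+(2b+a(s-t))y$, and $2b+a(s-t)=a(s-t-2)=0$ while $a(1+t)=0$. On $(x,y)-(x\utr y,y\otr x)$ we get $a(1-t)x+(b-a(s-t)-bs)y$, and a short computation with $b=-a$ shows $b-a(s-t)-bs=-a(1-t)=0$. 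Hence $\phi$ kills $C^{BD}_2(X;A)$ and defines a cochain on $C^{BK}_2(X;A)$.

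For the cocycle condition I would expand $\partial_3(x,y,z)$ from the birack boundary formula, obtaining
\[
\partial_3(x,y,z)=(y,z)-(y\otr x,z\otr x)-(x,z)+(x\utr y,z\otr y)+(x,y)-(x\utr z,y\utr z),
\]
apply $\phi$ termwise, substitute the Alexander operations, and collect the coefficients of $x$, $y$, $z$ separately. The $x$-coefficient cancels identically (no hypotheses needed), while the $y$- and $z$-coefficients come out to $(1-t)(a+b)$ and $-(s-t)(a+b)$ respectively, which vanish since $a+b=0$ (indeed already because $a(1-t)=b(1-t)=0$). Therefore $\phi\circ\partial_3=0$, so $\phi$ is a genuine bikei $2$-cocycle and determines a class in $H^2_{BK}(X;A)$.

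The whole argument is a sequence of routine linear computations; the only place that requires care is the bookkeeping — using the correct operation ($\utr$ versus $\otr$) in each slot of $\partial^2_k$ when writing out $\partial_3$, and systematically rewriting products such as $at$, $as$, and $a(s-t)$ in terms of $a$ via the given relations so that each verification collapses to one of the stated hypotheses. There is no conceptual obstacle: the content of the proposition is precisely that this particular list of linear relations on $(a,b)$ is exactly what is required for the linear $2$-cochain $\phi(x,y)=ax+by$ to be simultaneously degenerate-trivial and a cocycle.
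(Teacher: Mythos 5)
Your proof is correct and follows essentially the same route as the paper's: a direct check that $\phi$ vanishes on each generator of $C^{BD}_2(X;A)$ followed by a termwise verification of the $2$-cocycle condition, with each step reducing to one of the stated relations on $a$ and $b$ (the paper merely substitutes $\phi(x,y)=a(x-y)$ up front where you carry $b$ along). Your expansion of $\partial_3(x,y,z)$ and the resulting coefficients $(1-t)(a+b)$ and $-(s-t)(a+b)$ match the paper's computation.
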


We will call such a cocycle a \textit{linear Mochizuki bikei cocycle} since 
it is similar to Mochizuki cocycles for Alexander quandles \cite{MO}.

\begin{proof}
We check the bikei degeneracy and cocycle conditions. First, 
checking the degeneracy conditions, $b=-a$  implies
$\phi(x,x)=ax+bx=(a+b)x=0$. Then setting 
$\phi(x,y)=a(x-y)$, the other degeneracy 
conditions are satisfied:
\begin{eqnarray*}
\phi(x,y)-\phi(x\utr y,y\otr x) & = & 0 \\
a(x-y)-a(tx+(s-t)y-sy) & = & 0 \\
a(1-t)x-a(-1+s-t-s)y & = & 0\\
a(1-t)x+a(1+t)y & = & 0
\end{eqnarray*}
since $a(1-t)=a(1+t)=0$,
\begin{eqnarray*}
\phi(x,y)+\phi(x\utr y,y) & = & 0 \\
a(x-y)+a(tx+(s-t)y-y) & = & 0 \\
a(1+t)x+a(-1+s-t-1)y & = & 0\\
a(1+t)x-a(s-t-2)y & = & 0
\end{eqnarray*}
since $a(s-t-2)=0$, and
\begin{eqnarray*}
\phi(x,y)+\phi(x,y\otr x) & = & 0 \\
a(x-y)+a(x-sy) & = & 0 \\
a(2)x+a(-1-s)y & = & 0\\
2ax-a(1+s)y & = & 0.
\end{eqnarray*}
Finally, we check the birack cocycle condition:
\begin{eqnarray*}
\phi(x,y)+\phi(y,z)+\phi(x\utr y, z\otr y) & = & 
\phi(x,z)+\phi(x\utr z,y\utr z)+\phi(y\otr x,z\otr x) \\
a(x-y)+a(y-z)+a(tx+(s-t)y- sz) & = & 
a(x-z)+a(tx+(s-t)z-ty-(s-t)z)+a(sy-sz) \\
a(1-t)x+a(s-t)y-a(1+s)z & = &
a(1+t)x+a(s-t)y-a(1+s)z
\end{eqnarray*}
as required.
\end{proof}

\begin{example}\label{ex:abk}
Let $X=\mathbb{Z}_8$ and set $s=3,t=1$ and $a=4$. Then we have 
$(1-s)(1-t)=0$, so $X$ is a bikei with operations 
\[x\utr y=x+2y\quad \mathrm{and}\quad x\otr y=3x. \]
Then we verify that
\[2(4)=4(1+3)=4(1+1)=4(1-1)=4(3-1-2)=0\]
and $\phi(x,y)=4x-4y$ is a nonzero bikei cocycle.
\end{example}

\section{\large\textbf{Cocycle Enhancements}}\label{CE}

Our motivation for bikei homology comes from the desire to extend cocycle 
enhancements of the bikei counting invariant to unoriented knots and links,
and in particular to non-orientable knotted surfaces in $\mathbb{R}^4$.

Let $\phi\in H^2_{BK}(X;A)$ and let $D$ be an unoriented knot or link diagram
representing a knot or link $K$. For any bikei homomorphism 
$f:\mathcal{BK}(K)\to X$, let $D_f$ denoted the $X$-coloring of $D$ determined
by $f$. That is, $f$ is a bikei homomorphism assigning elements of $X$ to
generators of $\mathcal{BK}(K)$ (which correspond one to one with semiarcs in 
$D$), while $D_f$ is the knot diagram $D$ with semiarcs labeled with
their images $f(x_j)\in X$. Then at each crossing we assign a 
\textit{Boltzmann weight} 
$\phi(x,y)$ where $x$ and $y$ are the bikei colors on the under and over 
crossing semiarcs when the crossing is positioned as depicted, with the
overstrand going from upper right to lower left.
\[\includegraphics{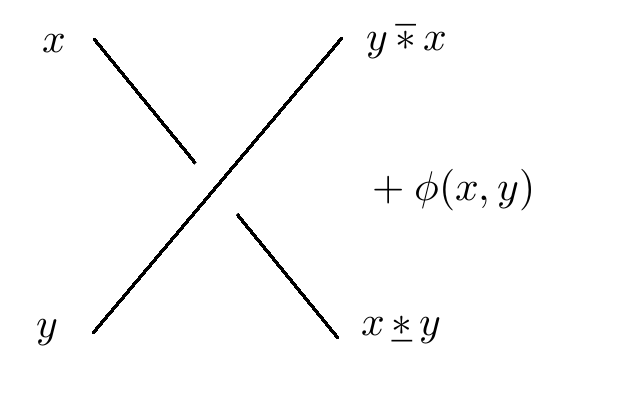}\]
Then the Boltzmann weight for the bikei coloring $f$ is the sum of the 
Boltzmann weights $\phi(x,y)$ at each crossing $C$ in the set 
$\mathcal{C}(D_f)$ of crossings in diagram $D_f$,
\[BW(f)=\sum_{C \in\mathcal{C}(D_f)} \phi(x,y).\]

The bikei 2-cocycle conditions are precisely the conditions required to ensure 
that the Boltzmann weight is unchanged by Reidemeister moves. 
The degeneracy conditions insure that the Boltzmann weight is well-defined and
give invariance under Reidemeister I and II moves:
\[\includegraphics{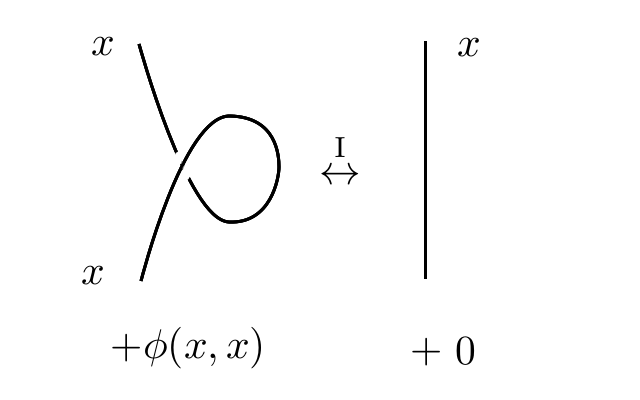} \includegraphics{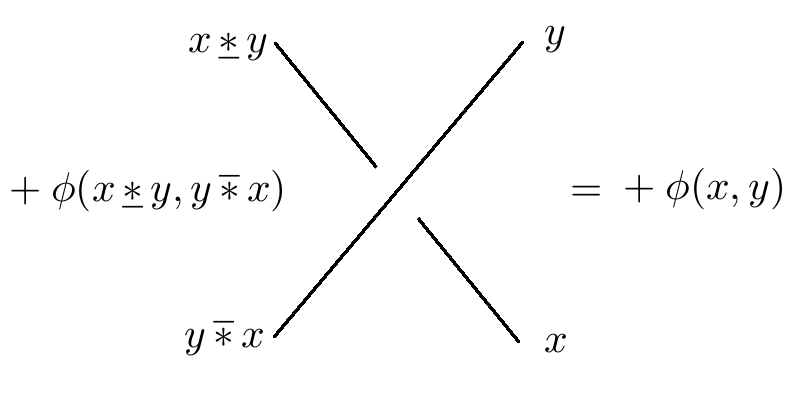}\]
\[\includegraphics{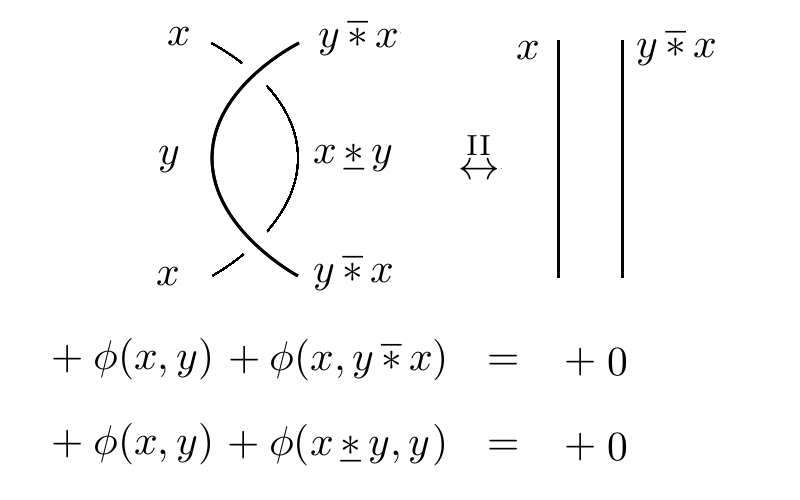}\]

The 2-cocycle condition 
\begin{eqnarray*}
\delta^2(\phi(x,y,z)) & = & 
\phi(\partial_2(x,y,z))\\
& = & \phi(-(y,z)+(x\utr y,z\otr y)
+(x,z)-(y\otr x,z\otr x)-(x,y)+(x\utr y,y\utr z)) \\
& = & 0 
\end{eqnarray*}
guarantees equivalence under Reidemeister III moves:
\[\includegraphics{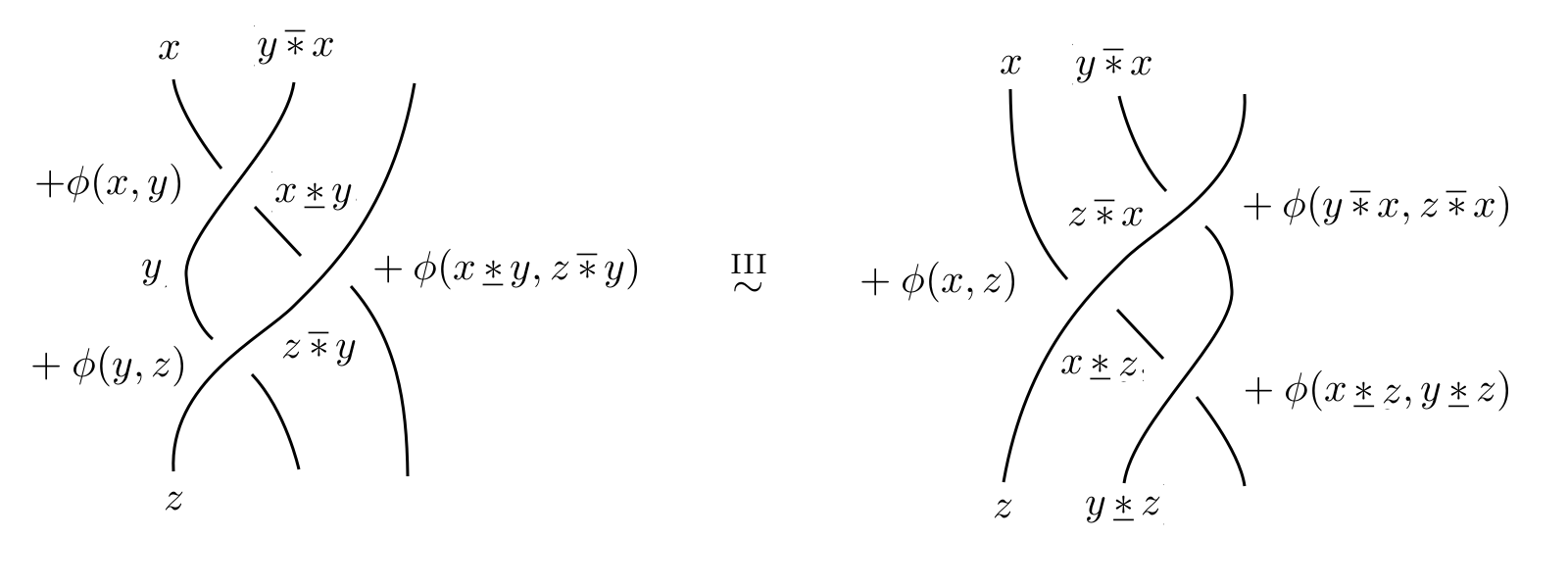}\]

Thus, we have 
\begin{definition}
Let $X$ be a finite bikei and $\phi\in H^2_{BK}(X;A)$. Then for any unoriented
knot or link $K$ represented by a diagram $D$, the \textit{2-cocycle 
enhanced bikei counting invariant} of $K$ is the multiset of Boltzmann weights
\[\Phi^{\phi,M}_X(D)=\{BW(D_f)\in A\ |\ f\in\mathrm{Hom}(\mathcal{BK}(K),X)\}.\]
We can convert this multiset into a polynomial form for ease of comparison
by making each Boltmann weight a formal exponent of a dummy variable $u$
and converting multplicities to positive integer coefficients:
\[\Phi^{\phi}_X(D)=\sum_{f\in\mathrm{Hom}(\mathcal{BK}(K),X)} u^{BW(D_f)}. \]
\end{definition}

By construction (and also see \cite{CES} etc.), we have
\begin{theorem}
If $X$ is a finite bikei, $\phi\in H^2_{BK}(X)$ and $D$ and $D'$ are 
unoriented knot or link diagrams related by Reidemeister moves, then
\[\Phi^{\phi,M}_X(D)=\Phi^{\phi,M}_X(D')
\quad\mathrm{and}\quad 
\Phi^{\phi}_X(D)=\Phi^{\phi}_X(D').\]
\end{theorem}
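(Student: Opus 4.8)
The plan is to establish the invariance of $\Phi^{\phi,M}_X(D)$ move by move, treating the multiset version first and then deducing the polynomial version as an immediate corollary since the polynomial is just a repackaging of the multiset. The key observation is that a Reidemeister move between $D$ and $D'$ induces a bijection between $\mathrm{Hom}(\mathcal{BK}(K),X)$ computed from $D$ and the same set computed from $D'$ (this is already implicit in the discussion preceding the statement, since the fundamental bikei is invariant and colorings are homomorphisms), so it suffices to show that under this bijection the Boltzmann weight $BW(D_f)$ is unchanged. Thus I would fix a coloring $f$ and a local picture where the move is performed, and compare the sum of Boltzmann weights $\phi(x,y)$ over crossings inside the disk where the move occurs, noting that crossings outside the disk and their colors are untouched.

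First I would handle Reidemeister I and II. For a type I kink, the single new crossing contributes $\pm\phi(x,x\utr y)$ or a similar term forced by the coloring rule, which lies in the bikei degenerate subgroup $C^{BD}_2(X;A)$ by the definition of that subgroup (the chains $(x,x)$, $(x,y)+(x\utr y,y)$, $(x,y)+(x,y\otr x)$), hence pairs to zero against the cocycle $\phi$; I would just match the local colorings coming from the crossing relations of the excerpt's Example to the listed degenerate chains. For a type II move, the two new crossings contribute Boltzmann weights whose sum is, up to sign, one of the degenerate chains $(x,y)+(x\utr y,y)$ or $(x,y)+(x,y\otr x)$ or $(x,y)-(x\utr y,y\otr x)$ evaluated by $\phi$, again zero. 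Here I would be careful that the two flavors of type II move (the two strands crossing with the two possible over/under patterns) each reduce to one of these degenerate relations, using axioms (ii.i)--(ii.iv) to identify the colors on the outgoing semiarcs with those on the incoming ones.

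Next I would handle Reidemeister III. The coloring rule forces specific bikei words on the six semiarcs leaving the local disk on each side of the move, and axioms (iii.i)--(iii.iii) together with (ii.iii),(ii.iv) guarantee these outgoing colors agree on both sides; this is the content of the exchange laws. Then the difference of the two sums of three Boltzmann weights is exactly $\phi$ applied to $\partial_2(x,y,z)$ as displayed in the excerpt just before the Definition, namely $\phi(-(y,z)+(x\utr y,z\otr y)+(x,z)-(y\otr x,z\otr x)-(x,y)+(x\utr y,y\utr z))$, which vanishes because $\phi$ is a 2-cocycle, i.e. $\delta^2\phi=\phi\partial_2=0$. The main obstacle, and the part requiring real care rather than invocation, is bookkeeping the signs and the over/under conventions: the Boltzmann weight at a crossing is defined only after rotating the crossing to a standard position with the overstrand from upper right to lower left, so for each variant of each move I must verify that after this standardization the colors and the ordering $(x,y)$ (under, over) are precisely the ones appearing in the relevant degenerate chain or in $\partial_2(x,y,z)$, rather than a permuted or negated version. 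Once the multiset invariance $\Phi^{\phi,M}_X(D)=\Phi^{\phi,M}_X(D')$ is established, the polynomial identity $\Phi^{\phi}_X(D)=\Phi^{\phi}_X(D')$ follows because $\Phi^{\phi}_X$ is obtained from the multiset by the purely formal operation $m\mapsto \sum u^{m}$, which sends equal multisets to equal polynomials.
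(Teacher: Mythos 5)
Your proposal is correct and follows exactly the route the paper takes: the paper's ``proof'' is simply ``By construction,'' referring to the preceding discussion in which the degenerate chains $(x,x)$, $(x,y)+(x\utr y,y)$, $(x,y)+(x,y\otr x)$, $(x,y)-(x\utr y,y\otr x)$ give invariance under Reidemeister I and II and the 2-cocycle condition $\phi\partial_2=0$ gives invariance under Reidemeister III, with the bijection of colorings coming from invariance of the fundamental bikei. Your write-up is a more careful, fully spelled-out version of the same argument, including the passage from the multiset to the polynomial form.
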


As with many knot invariants, we can extend $\Phi_X^{\phi}$ to the case of 
virtual knots and links by simply ignoring the virtual crossings, thinking of
our virtual knot diagram as drawn on a surface with sufficient genus to avoid 
virtual crossings; see \cite{K,CKS} for more. In particular, semiarcs do not 
end at virtual crossings.

\begin{example}\label{ex:1}
Let $X$ be the Alexander bikei from example \ref{ex:abk} and
consider the virtual Hopf link $VH$ below.
\[\includegraphics{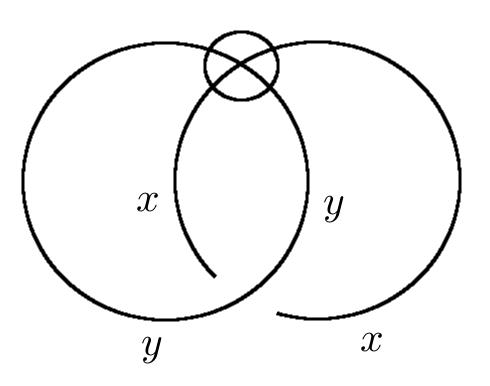}\]
We obtain system of coloring equations 
\[
\begin{array}{rcl}
x\utr y & = & x \\
y\otr x & = & y
\end{array}
\Rightarrow
\begin{array}{rcl}
x+2y & = & x \\
3y & = & y
\end{array}
\Rightarrow
2y=0
\]
so a pair $(x,y)$ yields a valid coloring for $y\in\{0,4\}$ and no further
conditions on $x\in\mathbb{Z}_8$, so there are $16$ $X$-colorings. Each coloring
has a Boltzmann weight of 
\[\phi(x,y)=4(x-y)=\left\{
\begin{array}{ll}
4 & x\ \mathrm{odd} \\
0 & x\ \mathrm{even} \\
\end{array}
\right.\] 
at the single crossing, so the invariant is $\Phi^{\phi}_X(VH)=8+8u^4$.

We can compare this with the case of the usual Hopf link
\[\includegraphics{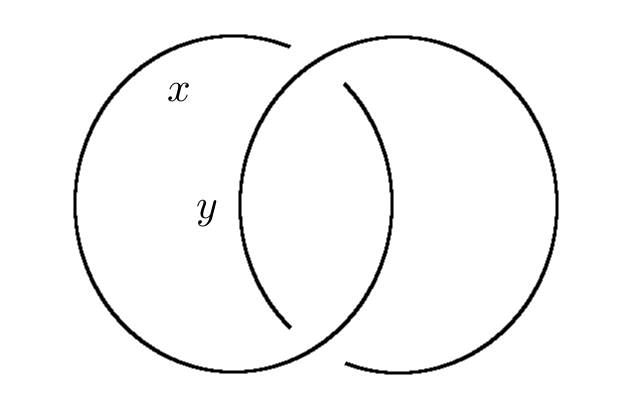}\]
where we obtain coloring equations $3x=x+2y$ and $3y=y+2x$, both reducing to 
$2x=2y$. Then there are sixteen colorings, and each has Boltzmann weight 
$\phi(x,y)+\phi(y,x)=4(x-y)+4(y-x)=0$, yielding an invariant value of 
$\Phi^{\phi}_X(H)=16$. Now, the virtual Hopf link can be distinguished from 
the classical Hopf link in other ways, e.g. the Hopf link has two colorings 
by the bikei $\mathbb{Z}_2$ with $x\utr y=x\otr y=x+1$ while the virtual 
Hopf link has none; this example demonstrates that $\Phi^{\phi}_X$
is not determined by the counting invariant and hence is a proper enhancement.
\end{example}

\section{\large\textbf{Invariants of Knotted Surfaces}}\label{KS}

We can also define bikei cocycle invariants for knotted unoriented (including 
non-orientable) surfaces in $\mathbb{R}^4$ in the same way. Recall that a 
\textit{marked graph diagram}, also called a \textit{marked vertex diagram}, is
a diagram with ordinary crossings together with \textit{saddle crossings}
\[\includegraphics{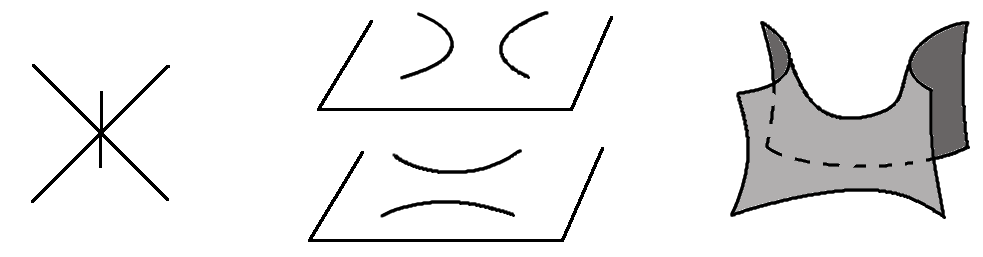}\]
representing saddle point. More precisely, given a knotted surface $\Sigma 
\subset \mathbb{R}^4$, we move the maxima in the $x_4$ direction to the 
hyperplane $x_4=1$, the minima to $x_4=-1$ and the saddle points to $x_4=0$. 
Then the intersection of $\Sigma$ with $x_4=0$ is a link diagram with 
singularities at the saddle points; we indicate the direction of the saddle
with a small bar. Such a diagram represents a knots closed surface if
both resolutions of the saddle yield unlinks; otherwise, the diagram 
represents a cobordism between the links represented by the smoothed diagrams.
Two such diagrams represent ambient isotopic knotted surfaces if and only if 
they are related by a sequence of the Reidemeister moves together with the
\textit{Yoshikawa moves}
\[\includegraphics{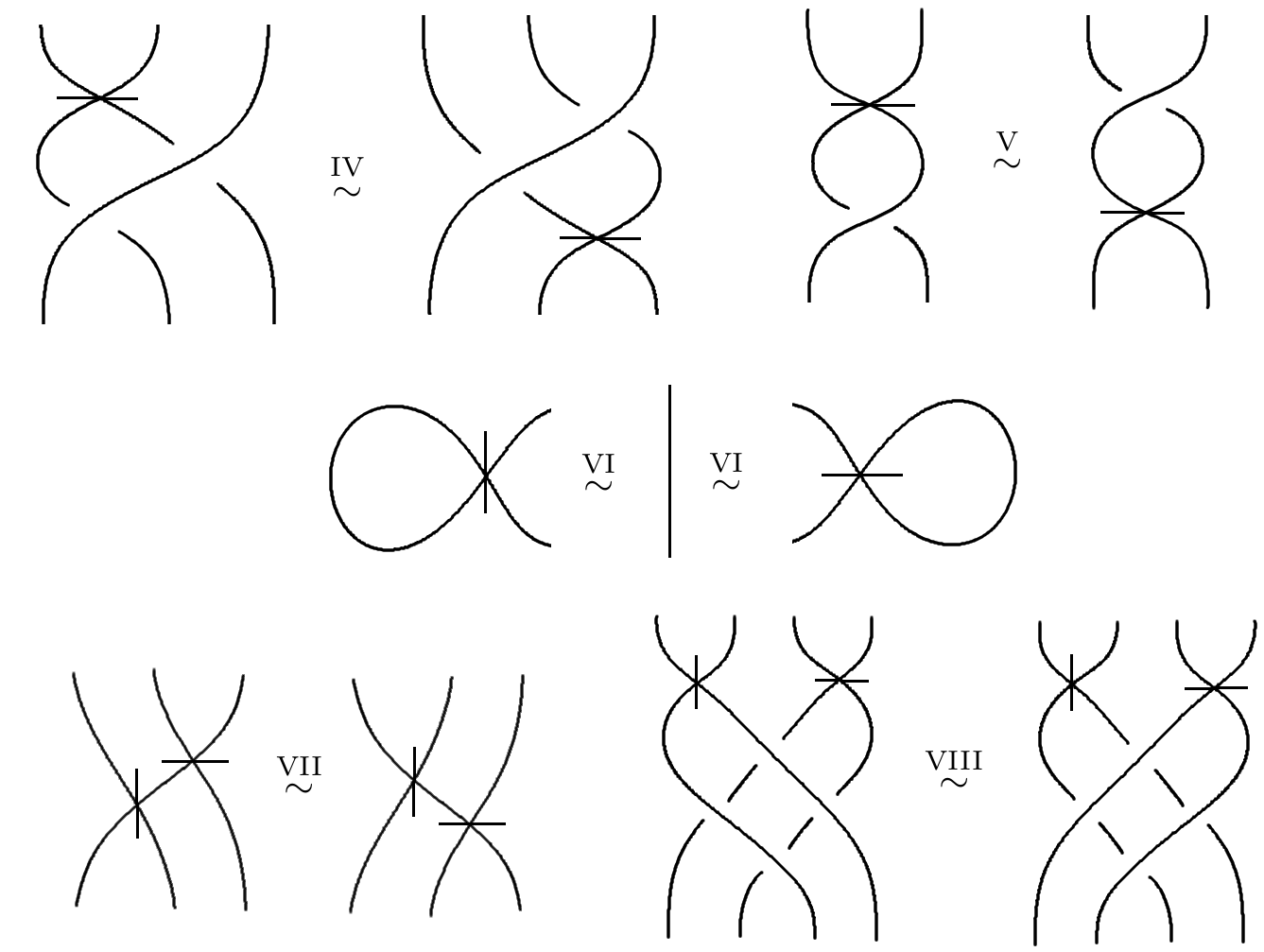}\]
See for instance \cite{CKS2,KJL2,KJL,L,LO} for more.

In \cite{NR}, bikei colorings and counting invariants of marked graph diagrams 
were considered. Specifically, all of the semiarcs meeting at a saddle crossing
determine the same generator of $\mathcal{BK}(\Sigma)$ and must have the same 
color. We now observe that we can enhance the bikei 
counting invariant with bikei $2$-cocycles in the same way as we did for
knots and links in $\mathbb{R}^3$. Specifically, we have

\begin{definition}
Let $X$ be a finite bikei and $\phi\in H^2_{BK}(X)$. Then for any unoriented
knotted surface $\Sigma$ represented by a marked vertex diagram $D$, 
the \textit{2-cocycle enhanced bikei counting invariant} of $\Sigma$ is the 
multiset  
\[\Phi^{\phi,M}_X(D)=\{BW(D_f)\ |\ f\in\mathrm{Hom}(\mathcal{BK}(\Sigma),X)\}\]
or its generating function
\[\Phi^{\phi}_X(D)=\sum_{f\in\mathrm{Hom}(\mathcal{BK}(\Sigma),X)} u^{BW(D_f)} \]
\end{definition}

We then have 
\begin{theorem}
If $X$ is a finite bikei, $\phi\in H^2_{BK}(X)$ and $D$ and $D'$ are 
unoriented marked vertex diagrams related by Yoshikawa moves, then
\[\Phi^{\phi,M}_X(D)=\Phi^{\phi,M}_X(D)
\quad\mathrm{and}\quad 
\Phi^{\phi}_X(D)=\Phi^{\phi}_X(D).\]
\end{theorem}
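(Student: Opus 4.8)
The plan is to verify invariance one Yoshikawa move at a time, reducing everything to local computations with bikei colorings. It suffices to show: whenever marked vertex diagrams $D$ and $D'$ differ by a single Yoshikawa move inside a disk $\Delta$ and agree outside it, the induced Tietze moves on bikei presentations give a bijection $f\leftrightarrow f'$ between the coloring sets $\mathrm{Hom}(\mathcal{BK}(\Sigma),X)$ which restricts to the identity outside $\Delta$, and moreover $BW(D_f)=BW(D'_{f'})$ for corresponding colorings. Granting this, summing $u^{BW(D_f)}$ over all colorings gives $\Phi^{\phi}_X(D)=\Phi^{\phi}_X(D')$, and the same pairing identifies the multisets $\Phi^{\phi,M}_X(D)=\Phi^{\phi,M}_X(D')$.

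First I would dispose of the three classical Reidemeister moves occurring among the Yoshikawa moves. These are checked exactly as in the knot and link case handled by the previous theorem: the bikei degeneracy conditions on $\phi$ give invariance under the Reidemeister I and II moves, and the $2$-cocycle identity $\delta^2\phi=0$ gives invariance under Reidemeister III. Saddle crossings located elsewhere in the diagram play no role here, since no Boltzmann weight is assigned at a saddle crossing and a coloring is determined by its values on semiarcs irrespective of whether a semiarc meets a classical or a saddle crossing.

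The remaining, genuinely new moves involve saddle crossings. The structural input here, from \cite{NR}, is that all semiarcs meeting at a saddle crossing represent the same generator of $\mathcal{BK}(\Sigma)$ and so are assigned the same element of $X$ by any $f$; thus colors pass through saddle crossings unchanged and saddle crossings contribute nothing to $BW$. For the Yoshikawa moves whose local picture involves only arcs and saddle crossings --- the saddle birth/death moves and the marker-slide moves --- corresponding colorings of $D$ and $D'$ give identical colors at every classical crossing of the whole diagram, so $BW(D_f)=BW(D'_{f'})$ immediately. For the moves in which a classical crossing interacts with a saddle-carrying strand, classical crossings are either created and destroyed in cancelling pairs or merely permuted; in the first case one combines the equal-color condition at the incident saddle with the appropriate degeneracy relation (one of $\phi(x,x)=0$, $\phi(x,y)+\phi(x\utr y,y)=0$, $\phi(x,y)+\phi(x,y\otr x)=0$, $\phi(x,y)=\phi(x\utr y,y\otr x)$) to cancel the new weights, and in the second case the $2$-cocycle condition applies just as for Reidemeister III.

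The routine part is drawing each Yoshikawa move decorated with bikei colors compatible with the saddle identifications and reading off the Boltzmann weights. The main obstacle I anticipate is purely bookkeeping: for each newly created or relocated classical crossing one must determine which degeneracy relation is the relevant one, given the placement of the overstrand (recall $\phi(x,y)$ is read with the overstrand running from upper right to lower left) and the fact that marked vertices carry no orientation. I expect that no algebraic identity beyond the bikei homology axioms will be needed --- this is exactly the reason the degenerate subcomplex was enlarged from $C^D_n$ to $C^{BD}_n$ --- so the verification closes using only the degeneracy and $2$-cocycle conditions already in hand.
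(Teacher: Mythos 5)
Your proposal matches the paper's proof in structure and substance: the paper likewise dispatches the classical Reidemeister moves as in the knot/link case, notes that the moves involving only saddle crossings (VI and VII) cannot change the Boltzmann weight, handles move V via $\phi(x,x)=0$, and cancels the contributions of moves IV and VIII using exactly the degeneracy relations you list (e.g.\ $\phi(x,y)+\phi(x,y\otr y)$-type chains, specifically $\phi(x,y)+\phi(x,y\otr x)$ for move IV and $2\phi(x,y)+2\phi(x,y\otr x)$ for move VIII). The only difference is that the paper carries out the color-tracing for each move explicitly in its figures, which is the ``routine bookkeeping'' your plan defers but correctly anticipates.
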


\begin{proof}
This is a matter of verifying that the Yoshikawa moves do not change the 
Boltzmann weight of a bikei colored marked vertex diagram. Moves VI and VII
do not involve non-saddle crossings, so these cannot change the Boltzmann 
weight, and in move V all semiarc colors are the same, so both sides of 
the move contribute $\phi(x,x)=0$. For moves IV and VIII, both sides of the 
move contribute degenerate chains: $\phi(x,y)+\phi(x,y\otr x)$ on both sides of move IV, and $2\phi(y,x)+2\phi(y,x\otr y)$ on the left and 
$2\phi(x,y)+2\phi(x,y\otr x)$ on the right of move VIII.
\[\raisebox{0.2in}{\includegraphics{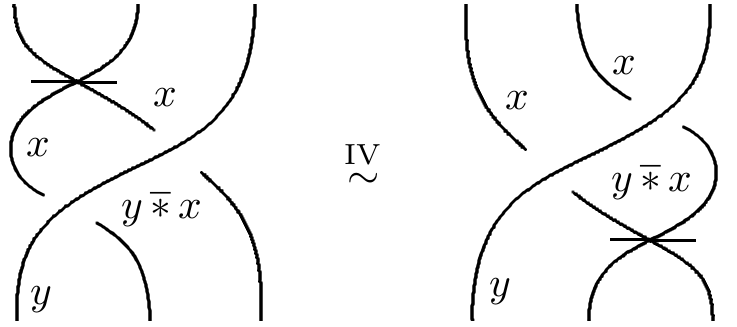}}\quad\quad
\includegraphics{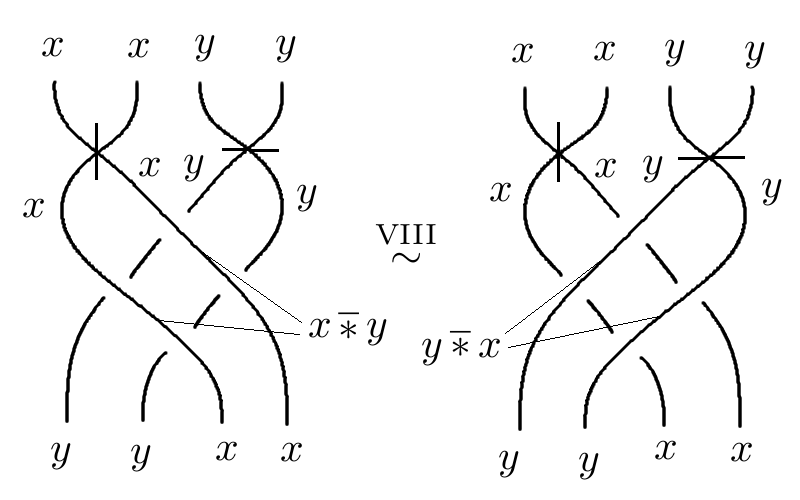}\]
\end{proof}

Analogously to the case of knotted and linked curves, including virtual 
crossings in marked vertex diagrams yields virtual knotted surface diagrams,
with the rule that two such diagrams are equivalent if related by 
Reidemeister moves, Yoshikawa moves and the detour move, i.e., redrawing 
an arc with only virtual crossings as another arc with only virtual crossings 
and the same endpoints. See \cite{K4} for more.

\begin{example}
Let $X$ again be the Alexander bikei from example \ref{ex:abk} and consider the
virtual marked vertex diagram $D$ below, representing a virtual linked surface 
with one sphere component and one projective plane component.
\[\includegraphics{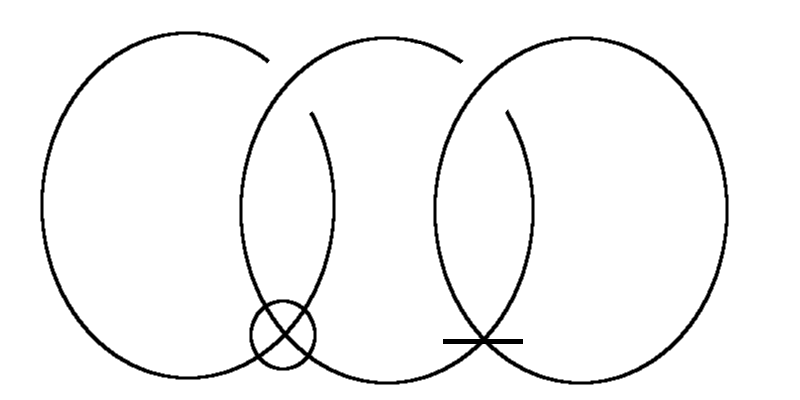}\]
$X$-labelings of $D$ are given by pairs $(x,y)\in(\mathbb{Z}_8)^2$
satisfying $3y=y$, i.e. $2y=0$, with $x=x+2y$ imposing no further 
conditions on $x$: 
\[\includegraphics{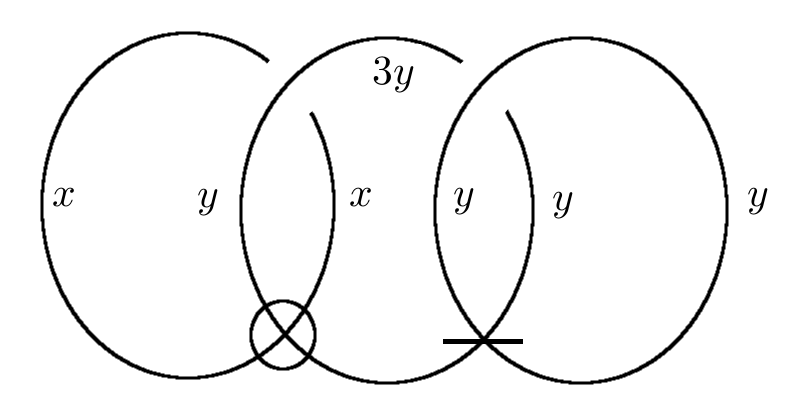}\]
Hence we have 16 $X$-colorings as in example \ref{ex:1}; each coloring has
Boltzmann weight $4(x-y)+4(y-y)$, so colorings with odd $x$ contribute $u^4$
while colorings with even $x$ contribute $1$ to the invariant, and we again 
have $\Phi^{\phi}_X(K)=8+8u^4$.
\end{example}

\section{\large\textbf{Questions}}\label{Q}

We conclude with a few questions for future research.

How does bikei homology generalize to other cases like involutory biracks, 
virtual bikei and parity bikei?  What about shadow colorings, either in the
marked vertex style or broken surface diagram style? It seems that additional
degeneracies might be required for shadow colorings, depending on the shadow 
coloring operation; what should these be?

We propose the following
\begin{conjecture}
The free part of $H^2_{BK}(X;\mathbb{Z})$ is $\{0\}$ for all finite bikei $X$.
\end{conjecture}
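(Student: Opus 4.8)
We sketch a possible approach, indicating where it appears to meet an essential obstacle. The first step is to pass to $\mathbb{Q}$ coefficients. Since $X$ is finite, every $C_n^{BK}(X;\mathbb{Z})$ is free of finite rank, so by the universal coefficient theorem the free part of $H^2_{BK}(X;\mathbb{Z})$ vanishes if and only if $H^2_{BK}(X;\mathbb{Q})=0$. This is the natural reformulation: the linear Mochizuki cocycles of the Proposition above already produce genuine $2$-torsion in $H^2_{BK}(X;\mathbb{Z})$, so only the free part can possibly vanish in general.

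The second step is a transfer argument exploiting the distinguishing feature of bikei: for each fixed $y$, the maps $x\mapsto x\utr y$ and $x\mapsto x\otr y$ are involutions of $X$ by axioms (ii.i) and (ii.ii), hence bijections, so averaging over $X$ is available once $|X|$ is a unit. For a bikei $2$-cocycle $\phi$, set $\psi(x)=\tfrac{1}{|X|}\sum_{w\in X}\phi(x,w)$; summing the $2$-cocycle identity over its last variable and using bijectivity of $z\mapsto z\otr x$ and $z\mapsto z\otr y$ collapses four of its six terms and gives $\phi=-\delta\psi+\tfrac{1}{|X|}\Psi\phi$, where $(\Psi\phi)(x,y)=\sum_{z\in X}\phi(x\utr z,\,y\utr z)$. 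Thus, after checking that $\psi$ can be chosen to descend past $C_1^{BD}$, every class in $H^2_{BK}(X;\mathbb{Q})$ is represented by a cocycle fixed by $\tfrac{1}{|X|}\Psi$, and the conjecture reduces to showing that such a fixed cocycle is always a coboundary. In the two cases of the earlier Proposition this is automatic, since there the degeneracy $(x,y)+(x,y\otr x)$, respectively $(x,y)+(x\utr y,y)$, already forces $2\phi(x,y)=0$; the transfer is only needed for genuine bikei.

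The remaining step is the crux, and it is where the scheme appears to fail in general. A $2$-cocycle pulled back along a bikei homomorphism onto a constant-action bikei $(Q,\sigma)$ (operations $u\utr v=u\otr v=\sigma(u)$) is automatically fixed by $\tfrac{1}{|X|}\Psi$; and for such a $Q$ with $\sigma$ a product of two disjoint transpositions, the bikei $2$-cocycles modulo coboundaries form a nonzero free abelian group, because every admissible $1$-cochain is forced to be constant on $\sigma$-orbits and therefore bounds nothing. So the conjecture as literally stated seems to require at least the extra hypothesis that $X$ has no nontrivial constant-action quotient. The plan would then have to be refined: one would (1) make the ``constant-action part'' of a finite bikei functorial and show that $\tfrac{1}{|X|}\Psi$ carries every rational cocycle into the span of those pulled back from it, and (2) conclude that the free part of $H^2_{BK}(X;\mathbb{Z})$ is detected entirely by that part, so that it is trivial exactly when the constant-action quotient is trivial. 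Step (1) is the main obstacle; step (2), together with the corrected statement, could then be checked against the kei case above and against small bikei.
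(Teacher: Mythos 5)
There is nothing in the paper to compare your attempt against: this statement is posed as an open conjecture, and the authors immediately ask whether it is true and, if not, what the smallest counterexample is. So the relevant question is whether your sketch is sound on its own terms. Your proposal does not prove the conjecture, but the skepticism in your final paragraph is justified, and your constant-action example can be upgraded from a heuristic objection to an explicit counterexample. Take $X=\{1,2,3,4\}$ with $\sigma=(1\,2)(3\,4)$ and $x\utr y=x\otr y=\sigma(x)$. An admissible $2$-cochain is a map $\phi:X^2\to\mathbb{Z}$ vanishing on the degenerate generators, i.e.\ satisfying $\phi(x,x)=0$, $\phi(\sigma x,\sigma y)=\phi(x,y)$, $\phi(x,\sigma y)=-\phi(x,y)$ and $\phi(\sigma x,y)=-\phi(x,y)$. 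Since
\[\partial_3(x,y,z)=[(y,z)-(\sigma y,\sigma z)]-[(x,z)-(\sigma x,\sigma z)]+[(x,y)-(\sigma x,\sigma y)],\]
the relation $\phi(\sigma x,\sigma y)=\phi(x,y)$ already forces $\phi\circ\partial_3=0$, so every admissible cochain is a cocycle. Every admissible $1$-cochain satisfies $\psi(x)=\psi(\sigma x)$, and $\partial_2(x,y)=(y)-(\sigma y)-(x)+(\sigma x)$, so $\delta^2\psi=0$ and there are no nonzero coboundaries, exactly as you say. Finally, setting $\epsilon(1)=\epsilon(3)=1$, $\epsilon(2)=\epsilon(4)=-1$, and $\phi(x,y)=\epsilon(x)\epsilon(y)$ when $x,y$ lie in different $\sigma$-orbits and $\phi(x,y)=0$ otherwise, gives an admissible integral cochain of infinite order. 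Hence $H^2_{BK}(X;\mathbb{Z})=Z^2_{BK}(X;\mathbb{Z})$ contains a free summand, and the conjecture as stated is false. The mechanism is the one you isolate: the degeneracies only force $2\phi=0$ at fixed points of the involutions $x\mapsto x\utr y$ and $x\mapsto x\otr y$, and a fixed-point-free involution with at least two orbits leaves room for integral classes.

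Two caveats on the earlier parts of your sketch. First, the reduction to $\mathbb{Q}$ coefficients is not as immediate as you claim: $C_2^{BK}(X;\mathbb{Z})$ is generally \emph{not} free (for a kei, $(x,y)+(x,y\otr x)=2(x,y)$ lies in the degenerate subgroup, so the quotient has $2$-torsion), so the universal coefficient theorem for free complexes does not apply verbatim; this does not matter for the counterexample above, which is carried out integrally, but it would need repair if you pursue the transfer strategy. Second, the transfer identity itself is asserted rather than verified, and the claim that $\psi(x)=\frac{1}{|X|}\sum_{w}\phi(x,w)$ ``can be chosen to descend past $C_1^{BD}$'' is exactly the delicate point, since admissibility requires $\psi(x)=\psi(x\utr y)=\psi(x\otr y)$; if you want to salvage a corrected conjecture (say, for bikei admitting no quotient onto a constant-action bikei with a fixed-point-free, non-transitive involution), that is where the real work lies.
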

Is this conjecture true? If not, what is the smallest counterexample?

\bibliography{sn-jr-rev}{}

\begin{thebibliography}{10}

\bibitem{AN}
S.~Aksoy and S.~Nelson.
\newblock Bikei, involutory biracks and unoriented link invariants.
\newblock {\em J. Knot Theory Ramifications}, 21(6):1250045, 13, 2012.

\bibitem{CES}
J.~S. Carter, M.~Elhamdadi, and M.~Saito.
\newblock Homology theory for the set-theoretic {Y}ang-{B}axter equation and
  knot invariants from generalizations of quandles.
\newblock {\em Fund. Math.}, 184:31--54, 2004.

\bibitem{CJKLS}
J.~S. Carter, D.~Jelsovsky, S.~Kamada, L.~Langford, and M.~Saito.
\newblock Quandle cohomology and state-sum invariants of knotted curves and
  surfaces.
\newblock {\em Trans. Amer. Math. Soc.}, 355(10):3947--3989, 2003.

\bibitem{CKS}
J.~S. Carter, S.~Kamada, and M.~Saito.
\newblock Stable equivalence of knots on surfaces and virtual knot cobordisms.
\newblock {\em J. Knot Theory Ramifications}, 11(3):311--322, 2002.
\newblock Knots 2000 Korea, Vol. 1 (Yongpyong).

\bibitem{CKS2}
S.~Carter, S.~Kamada, and M.~Saito.
\newblock {\em Surfaces in 4-space}, volume 142 of {\em Encyclopaedia of
  Mathematical Sciences}.
\newblock Springer-Verlag, Berlin, 2004.
\newblock Low-Dimensional Topology, III.

\bibitem{CEGN}
J.~Ceniceros, M.~Elhamdadi, M.~Green, and S.~Nelson.
\newblock Augmented biracks and their homology.
\newblock {\em Internat. J. Math.}, 25(9):1450087, 19, 2014.

\bibitem{EN1}
M.~Elhamdadi and S.~Nelson.
\newblock {$N$}-degeneracy in rack homology and link invariants.
\newblock {\em Hiroshima Math. J.}, 42(1):127--142, 2012.

\bibitem{EN}
M.~Elhamdadi and S.~Nelson.
\newblock {\em Quandles---an introduction to the algebra of knots}, volume~74
  of {\em Student Mathematical Library}.
\newblock American Mathematical Society, Providence, RI, 2015.

\bibitem{FR}
R.~Fenn and C.~Rourke.
\newblock Racks and links in codimension two.
\newblock {\em J. Knot Theory Ramifications}, 1(4):343--406, 1992.

\bibitem{FRS}
R.~Fenn, C.~Rourke, and B.~Sanderson.
\newblock Trunks and classifying spaces.
\newblock {\em Appl. Categ. Structures}, 3(4):321--356, 1995.

\bibitem{J}
D.~Joyce.
\newblock A classifying invariant of knots, the knot quandle.
\newblock {\em J. Pure Appl. Algebra}, 23(1):37--65, 1982.

\bibitem{K}
L.~H. Kauffman.
\newblock Virtual knot theory.
\newblock {\em European J. Combin.}, 20(7):663--690, 1999.

\bibitem{K4}
L.~H. Kauffman.
\newblock Virtual knot cobordism.
\newblock In {\em New ideas in low dimensional topology}, volume~56 of {\em
  Ser. Knots Everything}, pages 335--377. World Sci. Publ., Hackensack, NJ,
  2015.

\bibitem{KJL2}
J.~Kim, Y.~Joung, and S.~Y. Lee.
\newblock On the alexander biquandles of oriented surface-links via marked
  graph diagrams.
\newblock {\em J. Knot Theory Ramifications}, 23(7):1460007, 26, 2014.

\bibitem{KJL}
J.~Kim, Y.~Joung, and S.~Y. Lee.
\newblock On generating sets of yoshikawa moves for marked graph diagrams of
  surface-links.
\newblock {\em J. Knot Theory Ramifications}, 24(4):1550018, 21, 2015.

\bibitem{L}
S.~Y. Lee.
\newblock Towards invariants of surfaces in 4-space via classical link
  invariants.
\newblock {\em Trans. Amer. Math. Soc.}, 361(1):237--265, 2009.

\bibitem{LO}
S.~J. Lomonaco, Jr.
\newblock The homotopy groups of knots. {I}. {H}ow to compute the algebraic
  {$2$}-type.
\newblock {\em Pacific J. Math.}, 95(2):349--390, 1981.

\bibitem{M}
S.~V. Matveev.
\newblock Distributive groupoids in knot theory.
\newblock {\em Mat. Sb. (N.S.)}, 119(161)(1):78--88, 160, 1982.

\bibitem{MO}
T.~Mochizuki.
\newblock Some calculations of cohomology groups of finite {A}lexander
  quandles.
\newblock {\em J. Pure Appl. Algebra}, 179(3):287--330, 2003.

\bibitem{NR}
S.~Nelson and P.~Rivera.
\newblock Bikei invariants and gauss diagrams for virtual knotted surfaces.
\newblock {\em J. Knot Theory Ramifications}, 25(3):1640008, 14, 2016.

\bibitem{NV}
S.~Nelson and J.~Vo.
\newblock Matrices and finite biquandles.
\newblock {\em Homology, Homotopy Appl.}, 8(2):51--73, 2006.

\bibitem{T}
M.~Takasaki.
\newblock Abstraction of symmetric transformations.
\newblock {\em Tohoku Math. J}, 49(145-207):43, 1942.

\end{thebibliography}
\bibliographystyle{abbrv}

\bigskip

\noindent
\textsc{Department of Mathematical Sciences \\
Claremont McKenna College \\
850 Columbia Ave. \\
Claremont, CA 91711}

\end{document}